\documentclass[10pt]{amsart}

\usepackage{amsmath, amsfonts, amssymb, xypic}
\usepackage{latexsym, verbatim}
\usepackage{graphicx}
\usepackage{color}
\usepackage{url}
\usepackage[table]{xcolor}
\usepackage{hyperref}
\usepackage{amsmath}

\newtheorem{thm}{Theorem}[section]
 \newtheorem{cor}[thm]{Corollary}
 
 \newtheorem{prop}[thm]{Proposition}

\theoremstyle{definition}

\newcommand{\Z}{\mathbb{Z}}

\newcommand{\CC}{\mathbb{C}}

\newcommand{\cA}{{\mathcal A}}

\newcommand{\la}{\lambda}
\newcommand{\La}{\Lambda}

\newcommand{\spa}{{\rm span}}

\newcommand{\Ke}{{\rm Ker}\,}

\newcommand{\del}{\partial}
\newcommand{\delb}{{\bar \partial}}
\newcommand{\mub}{{\bar \mu}}
\newcommand{\lab}{{\bar \lambda}}
\newcommand{\taub}{{\bar \tau}}

\newcommand{\Img}{\mathrm{Im}}

\title{Harmonic symmetries for Hermitian manifolds}

\author[S. Wilson]{Scott O. Wilson}
  \address[S. Wilson]{Department of Mathematics, Queens College, City University of New York, 65-30 Kissena Blvd., Flushing, NY 11367}
  \email{scott.wilson@qc.cuny.edu}

\thanks{The author acknowledges support provided by a PSC-CUNY Award, jointly funded by The Professional Staff Congress and The City University of New York.}

\keywords{Hermitian manifold, complex manifold, Lefschetz duality}
\subjclass[2010]{53C55}

\begin{document}

\begin{abstract} Complex manifolds with compatible metric have a naturally defined subspace of harmonic differential forms that satisfy Serre, Hodge, and conjugation duality, as well as hard Lefschetz duality. This last property follows from a representation of $\mathfrak{sl}(2,\CC)$, generalizing the well known structure on the harmonic forms of compact K\"ahler manifolds. Some topological implications are deduced.
\end{abstract}

\maketitle


\section{Introduction}

While there are many known topological obstructions to a manifold having a K\"ahler metric, beyond complex dimension $2$, our understanding of the relationship between complex structures and topology is much more primitive.

Many interesting and important invariants of complex structures, including the dimensions of Dolbeault and Bott-Chern/Aeppli cohomologies, are bounded from below by topological quantities, such as Betti numbers. But there are comparatively few reverse inequalities, and these are desirable for showing that a complex manifold has non-trivial topology, or conversely that a smooth manifold does not have a complex structure.

This short note describes some topological and geometric inequalities for Hermitian manifolds, i.e. complex manifolds with compatible metric. They are expressed in terms of the kernel of a certain Laplacian-type operator, derived in some sense from the locus of the symplectic condition. 
The kernel of this second-order self-adjoint elliptic operator determines a subspace of harmonic forms that satisfies the Serre, Hodge, and conjugation dualities, generalizing the K\"ahler case. Moreover, there is an induced representation of $\mathfrak{sl}(2,\CC)$ on these harmonic forms, yielding a generalization of hard Lefschetz duality. In particular, it holds that the dimension of kernel of this elliptic operator, beginning from a given bidegree, is non-decreasing up to half the dimension of the manifold, as in the K\"ahler case, c.f. Corollary \ref{cor:nondec}. Several corollaries involving the Betti numbers are deduced from this.

The work here relies on a  generalization of the K\"ahler identities to the Hermitian setting, due to Demailly \cite{De}, but implicit in \cite{Gr}. The author was influenced by \cite{Po}, where similar Laplacian-type operators are used to study the Fr\"olicher spectral sequence. Finally, one may notice the results here mirror the structure and statements on the almost-K\"ahler side, recently described in \cite{CWAK}.  

The author thanks Joana Cirici for many discussions related to this work, as well as comments on an earlier draft.  

\section{Hermitian Identities}
Consider a complex manifold $(M,J)$ with Hermitian metric $\langle \, , \, \rangle$, and $(1,1)$-form $\omega$.
Let $L: \cA^{p,q} \to \cA^{p+1,q+1}$ be the Lefschetz operator on $(p,q)$-forms, $L(\eta) = \omega \wedge \eta$, and let 
$\La : \cA^{p,q} \to \cA^{p-1,q-1}$ be the adjoint.  The operators $\{L, \La, H = [L,\La]\}$ generate a representation of $\mathfrak{sl}(2,\CC)$ on $\cA = \bigoplus_{p,q} \cA^{p,q}$, \cite{Weil}. 

Let
 $\la = [\del, L] = (\del \omega \wedge \cdot )$, so that  
$\lab = [\delb,L] = (\delb \omega \wedge \cdot)$.
Then
\[
\left( d + (d \omega \wedge \cdot) \right) ^2 = (d + \la + \lab)^2 =0,
\]
which is equivalent to
\begin{equation} \label{eq:dlalab}
\begin{split}
\delb^2 &= 0 \\
\del^2 &=0 \\
[\delb,\del] &=0
\end{split}
\quad \quad
\begin{split}
\lab ^2 &=0 \\
[\delb, \lab] &= 0\\
[\lab, \del] + [\delb, \la]&=0
\end{split}
\quad \quad
\begin{split}
[\lab, \la] &=0\\
 [\del, \la] &=0 \\
\la ^2&=0,
\end{split}
\end{equation}
as well as all of the adjoint equations. 

The operator $\lab$ has bidegree $(1,2)$ and governs the symplectic condition: a Hermitian manifold is K\"ahler if and only if $\lab = 0$. By comparison, the component of exterior $d$ on an  almost complex manifold 
that governs integrability has mirror bidegree  $(-1,2)$, under the bigrading symmetry $(p,q) \to (n-p,q)$.

In \cite{De}, Demailly derives a set of Hermitian identities which generalize the K\"ahler identities.
Consider the zero-order \emph{torsion} operator $\tau:= [\La, \la]$ of bidegree $(1,0)$. Demailly shows

\begin{equation} \label{eq:Ldtau}
\begin{split}
[\La, \delb] &= -i(\del^* + \tau^*) \\
[\La, \del] &= i(\delb^* + \taub^*) \\
[L, \delb^*] &= -i(\del + \tau) \\
[L, \del^*] &= i(\delb+ \taub), \\
\end{split}
\end{equation}
with K\"ahler identities recovered in the case $\tau=0$. It is also shown that   $[\La, \tau ] = -2i \taub^*$ and $[L,\tau]= 3 \la$, so that
\begin{equation} \label{eq:Ltaula}
\begin{split}
[\La, \tau ] &= -2i \taub^* \\
[\La, \taub ] &= 2i \tau^* \\
[L, \tau^* ] &= -2i \taub \\
[L, \taub^* ] &= 2i \tau \\
\end{split}
\quad \quad
\begin{split}
[L,\taub ]  &= 3 \lab \\
[L,\tau ]  &= 3 \la \\
[\La,\taub^* ]  &= -3 \lab^* \\
[\La,\tau^* ]  &= -3 \la^*,
\end{split}
\end{equation}
and we also have
\begin{equation} \label{eq:Llatau}
\begin{split}
[\La, \la ] &=  \tau\\
[\La, \lab ] &=  \taub \\
[L, \la^* ] &=  -\tau^* \\
[L, \lab^* ] &= -\taub^* \\
\end{split}
\quad \quad
\begin{split}
[L,\la ]  &= 0 \\
[L,\lab ]  &= 0  \\
[\La, \la^* ]  &= 0 \\
[\La,\lab^* ]  &= 0.
\end{split}
\end{equation}

For any operator $\delta$, let $\Delta_\delta = [\delta, \delta^*]$, and let $\Delta_\delta^{p,q}$ denote the restriction to $\cA^{p,q}$.  It follows from
\eqref{eq:Ltaula}  and \eqref{eq:Llatau} that

\begin{cor}
For any Hermitian manifold
there is an induced representation of $\mathfrak{sl}(2,\CC) = \spa_\CC \{L, \La, H\}$ on the space
$\Ke (\Delta_\tau + \Delta_\taub + \Delta_\la + \Delta_\lab)$.
In fact,
\begin{equation} 
\begin{split}
[L,\Delta_\la] + [\la,\tau^*] &=0, 
\end{split}
\quad \quad
\begin{split}
[L,\Delta_\tau] &= 3 [\la, \tau^*]-2i[\tau,\taub].
\end{split}
\end{equation}
\end{cor}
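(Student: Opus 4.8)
The plan is to reduce the statement to two observations: that $\Ke D$, where $D := \Delta_\tau + \Delta_\taub + \Delta_\la + \Delta_\lab$, is exactly the simultaneous kernel of the eight first-order operators $\tau,\taub,\la,\lab$ and their adjoints; and that the complex span of those eight operators is stable under the adjoint actions $[L,\cdot\,]$ and $[\La,\cdot\,]$. Granting these, invariance of $\Ke D$ under $L$ and $\La$ is immediate, $H=[L,\La]$ preserves $\Ke D$ since $L$ and $\La$ do, and the $\mathfrak{sl}(2,\CC)$ relations — which already hold on all of $\cA$ by the Weil representation — restrict to the invariant subspace $\Ke D$ to give the asserted representation.

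For the first observation I would use that each $\Delta_\delta = [\delta,\delta^*] = \delta\delta^* + \delta^*\delta$ (the operators $\tau,\taub,\la,\lab$ all have odd total degree, so the graded commutator is an anticommutator) is non-negative and self-adjoint, with $\langle\Delta_\delta\alpha,\alpha\rangle = \|\delta\alpha\|^2 + \|\delta^*\alpha\|^2$. On the compact manifold $M$, $\langle D\alpha,\alpha\rangle$ is then a sum of four such non-negative terms, so $D\alpha = 0$ forces $\delta\alpha = \delta^*\alpha = 0$ for every $\delta \in \{\tau,\taub,\la,\lab\}$; thus $\Ke D = \bigcap_\delta(\Ke\delta \cap \Ke\delta^*)$, and in particular all eight operators annihilate $\Ke D$.

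For the second observation I would simply read off \eqref{eq:Ltaula} and \eqref{eq:Llatau}: $[L,\cdot\,]$ sends the list $(\tau,\taub,\tau^*,\taub^*,\la,\lab,\la^*,\lab^*)$ to $(3\la,\,3\lab,\,-2i\taub,\,2i\tau,\,0,\,0,\,-\tau^*,\,-\taub^*)$, while $[\La,\cdot\,]$ sends it to $(-2i\taub^*,\,2i\tau^*,\,-3\la^*,\,-3\lab^*,\,\tau,\,\taub,\,0,\,0)$, each entry again a scalar multiple of one of the eight operators. Since $L$ is even, for any $X$ in the list and any $\alpha\in\Ke D$ one has $X(L\alpha) = L(X\alpha) - [L,X]\alpha = 0$, because $X\alpha=0$ and $[L,X]\alpha$ is a multiple of some $Y\alpha = 0$; the same computation with the second table gives $X(\La\alpha)=0$. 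Hence $L$ and $\La$ preserve $\Ke D$.

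Finally I would verify the two displayed identities by the Leibniz rule $[L,AB]=[L,A]B+A[L,B]$ (sign-free, as $L$ is even). Expanding $[L,\Delta_\la]=[L,\la\la^*+\la^*\la]$ and using $[L,\la]=0$, $[L,\la^*]=-\tau^*$ collapses it to $-(\la\tau^*+\tau^*\la)=-[\la,\tau^*]$; expanding $[L,\Delta_\tau]$ with $[L,\tau]=3\la$, $[L,\tau^*]=-2i\taub$ gives $3(\la\tau^*+\tau^*\la)-2i(\tau\taub+\taub\tau)=3[\la,\tau^*]-2i[\tau,\taub]$. The main point requiring care is the consistent graded-sign convention that renders these right-hand brackets anticommutators, together with the compactness invoked for the kernel characterization; there is no further analytic obstacle. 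As a consistency check, conjugating and summing the four such identities yields $[L,D]=2[\la,\tau^*]+2[\lab,\taub^*]$ — the $2i[\tau,\taub]$ contributions cancelling — which vanishes on $\Ke D$ by the first observation and recovers $L$-invariance a second way.
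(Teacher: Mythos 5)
Your algebraic core is correct and is essentially the paper's own proof: the paper establishes exactly your two displayed identities by the graded Jacobi/Leibniz rule, namely $[L,\Delta_\la]=[\la,[L,\la^*]]=-[\la,\tau^*]$ (using $[L,\la]=0$ and $[L,\la^*]=-\tau^*$) and $[L,\Delta_\tau]=[[L,\tau],\tau^*]+[\tau,[L,\tau^*]]=3[\la,\tau^*]-2i[\tau,\taub]$, and it leaves the kernel characterization and the invariance argument implicit. Your additional material is sound: the observation that the complex span of $\tau,\taub,\la,\lab$ and their adjoints is stable under $[L,\cdot\,]$ and $[\La,\cdot\,]$, the sign bookkeeping ($L$ even, the others odd, so the relevant brackets are anticommutators), and the consistency check $[L,\Delta_\tau+\Delta_\taub+\Delta_\la+\Delta_\lab]=2[\la,\tau^*]+2[\lab,\taub^*]$ all check out.

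There is, however, one genuine defect: your characterization of the kernel uses compactness, while the corollary is asserted for \emph{any} Hermitian manifold. You write ``on the compact manifold $M$'' and pair against the global $L^2$ inner product $\langle D\alpha,\alpha\rangle$; on a non-compact manifold this pairing need not exist, so that step fails as written, and your proof covers only the compact case. The repair is to notice that $\tau,\taub,\la,\lab$ are \emph{zero-order} (algebraic) operators --- not first-order, as you call them: $\la=\del\omega\wedge\cdot$ is wedging with a fixed $3$-form, $\tau=[\La,\la]$, and the adjoints are the pointwise adjoints. Hence each $\Delta_\delta$ is a non-negative Hermitian endomorphism of each fiber of the bundle of forms, and at every point $x$ one has $\langle D\alpha,\alpha\rangle_x=\sum_\delta\left(|\delta\alpha|_x^2+|\delta^*\alpha|_x^2\right)$, so $D\alpha=0$ forces $\delta\alpha=\delta^*\alpha=0$ pointwise, with no integration and no compactness. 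With this substitution the rest of your argument (invariance of $\Ke D$ under $L$ and $\La$, hence under $H=[L,\La]$, and restriction of the Weil representation) goes through verbatim and yields the statement in its stated generality.
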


\begin{proof} Using \eqref{eq:Llatau}
\[
[L,\Delta_\la] = [L,[\la,\la^*]] = [\la, [L,\la^*]] = -[\la,\tau^*]
\]
and
\[
[L,\Delta_\tau] = [L, [\tau, \tau^*]] = [[L,\tau],\tau^*] + [\tau,[L,\tau^*]]
= 3 [\la, \tau^*]-2i[\tau,\taub].
\]
\end{proof}

The following additional relations can also be deduced, by the Jacobi identity, 
\begin{equation} 
\begin{split}
[\lab, \tau] 
&= -[\taub,\la] ,
\end{split}
\quad \quad
\begin{split}
[\tau,\tau] &= 2i [ \la ,\taub^*],
\end{split}
\quad \quad
\begin{split}
2 [\taub^*,\tau] &= 3 [\lab^*,\la],
\end{split}
\end{equation}
as well as two more relations
\begin{equation} 
\begin{split}
[\del , \delb^* + \taub^*] = 0,
\end{split}
\quad \textrm{and} \quad
\begin{split}
[\delb, \del^* + \tau^*]= 0,
\end{split}
\end{equation}
that are proved in \cite{De}.

From the above relations we obtain, by taking conjugates and adjoints, all quadratic relations on the $\Z_2$-graded Lie algebra generated by the operators $\del, \delb, \tau, \taub, \la, \lab, L$ and their adjoints. In the physics literature, it is often lamented that the failure of the K\"ahler condition corresponds to the symmetry breaking in the algebra of operators. Contrary their being any deficiency in the algebra, and similar to the almost-K\"ahler case \cite{CWAK}, this algebra of operators can be well understood by including the lower order terms that vanish in the K\"ahler case. Moreover, there seems to be some interesting relationship between the two discussions.

\section{Harmonic Symmetries}

Let $(M,J, \omega)$ be a compact Hermitian manifold. Consider the following positive definite self-adjoint elliptic operator of order two:
\[
\square = \Delta_\del + \Delta_\delb + \Delta_\tau + \Delta_\taub + \Delta_\la + \Delta_\lab .
\]
This is a real operator, and the last four summands are all order zero. Let $\square^{p,q}$ denote the restriction to $\cA^{p,q}$. 

\begin{thm} \label{main}
Let $(M,J, \omega)$ be a compact Hermitian manifold of complex dimension $n$.  For any $0 \leq k \leq 2n$,  there is an orthogonal direct sum decomposition
 \[
  \Ke(\square) \cap \cA^{k} = \bigoplus_{p+q =k}  \Ke(\square^{p,q}). 
   \]
For all $0 \leq p,q \leq  n$ the following dualities hold:
 \begin{enumerate}
  \item \emph{(Complex conjugation)}. We have equalities 
    \[   \Ke(\square^{p,q})   =   \overline{ \Ke(\square^{q,p}) } .\]
\item \emph{(Hodge duality)}. The Hodge $\star$-operator induces isomorphisms
\[
\star :  \Ke(\square^{p,q})  \to  \Ke(\square^{n-q,n-p}).
\]
\item \emph{(Serre duality)}. There are isomorphisms
\[ \Ke(\square^{p,q})  \cong  \Ke(\square^{n-p,n-q}).\]
 \end{enumerate}

The operators $\{L,\Lambda, H \}$ define a finite dimensional representation of $\mathfrak{sl}(2,\CC)$ on  
\[
 \Ke(\square)  = \bigoplus_{p,q \geq 0}  \Ke(\square^{p,q}).
\] 
Moreover, for every $0 \leq p \leq n$ and all $p \leq k \leq n$ the maps
\[
L^{n-k} :  
 \Ke(\square^{p,k-p})  \stackrel{\cong}{\longrightarrow} 
 \Ke(\square^{p+n-k,n-p}).
\]
are isomorphisms.
\end{thm}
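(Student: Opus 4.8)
The plan is to first show that the Lefschetz operators $L$, $\La$ and $H=[L,\La]$ preserve $\Ke(\square)$, and then to extract the stated isomorphisms from the representation theory of finite-dimensional $\mathfrak{sl}(2,\CC)$-modules. I would begin by recording the decisive feature of the kernel: since $\square$ is a finite sum of the non-negative self-adjoint operators $\Delta_\del,\Delta_\delb,\Delta_\tau,\Delta_\taub,\Delta_\la,\Delta_\lab$, and each $\Delta_\delta=\delta\delta^*+\delta^*\delta$ satisfies $\langle\Delta_\delta\alpha,\alpha\rangle=\|\delta\alpha\|^2+\|\delta^*\alpha\|^2$, a form $\alpha$ lies in $\Ke(\square)$ if and only if it is annihilated by every one of $\del,\delb,\tau,\taub,\la,\lab$ and all of their adjoints. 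Ellipticity of $\square$ together with compactness of $M$ guarantees in addition that $\Ke(\square)$ is finite-dimensional.

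Next I would prove that $[L,\square]$ vanishes on $\Ke(\square)$, which yields $\square(L\alpha)=L\square\alpha-[L,\square]\alpha=0$ for $\alpha\in\Ke(\square)$, so that $L$ preserves the kernel. The key point is a closure property of the bracket with $L$: the identities \eqref{eq:Ldtau}, \eqref{eq:Ltaula} and \eqref{eq:Llatau} show that $[L,\,\cdot\,]$ maps each of $\del,\delb,\tau,\taub,\la,\lab$ and their adjoints back into the linear span of this same collection (for instance $[L,\del]=-\la$, $[L,\del^*]=i(\delb+\taub)$, $[L,\la]=0$ and $[L,\la^*]=-\tau^*$). Expanding each summand of $[L,\square]$ by the graded Jacobi identity, $[L,\Delta_\delta]=[[L,\delta],\delta^*]+[\delta,[L,\delta^*]]$, therefore writes $[L,\square]$ as a sum of graded commutators $[A,B]$ in which both $A$ and $B$ are combinations of the operators listed above; this reproduces in particular the formulas of the Corollary. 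As each such operator annihilates $\Ke(\square)$, every term $[A,B]\alpha=AB\alpha\pm BA\alpha$ vanishes there, whence $[L,\square]|_{\Ke(\square)}=0$. Applying the same reasoning to the Hermitian-adjoint relations gives $[\La,\square]|_{\Ke(\square)}=0$, and then $H=[L,\La]$ also preserves $\Ke(\square)$. Since $\{L,\La,H\}$ already obey the $\mathfrak{sl}(2,\CC)$-relations on all of $\cA$, their restriction makes $\Ke(\square)$ a finite-dimensional representation of $\mathfrak{sl}(2,\CC)$.

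Finally I would read off the Lefschetz isomorphisms. Each $\Delta_\delta$ preserves bidegree, hence so does $\square$, giving $\Ke(\square)=\bigoplus_{p,q}\Ke(\square^{p,q})$; and on $\cA^{p,q}$ the element $H$ acts as the scalar $p+q-n$ by the standard Weil computation. Thus $\Ke(\square^{p,k-p})$ lies in the weight space of weight $k-n=-(n-k)$, and in any finite-dimensional $\mathfrak{sl}(2,\CC)$-module $L^{n-k}$ restricts to an isomorphism from the weight $-(n-k)$ space onto the weight $+(n-k)$ space. Because $L$ has bidegree $(1,1)$, the operator $L^{n-k}$ carries $\Ke(\square^{p,k-p})$ into the single summand $\Ke(\square^{p+n-k,n-p})$; as $p$ ranges over $0,\dots,k$ these target bidegrees are distinct and exhaust total degree $2n-k$, so the weight-space isomorphism is block-diagonal for the bidegree decomposition. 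A dimension count then forces each block $L^{n-k}:\Ke(\square^{p,k-p})\to\Ke(\square^{p+n-k,n-p})$ to be an isomorphism.

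I expect the second step to be the only real obstacle: one must verify that the brackets $[L,\square]$ and $[\La,\square]$ genuinely close up into commutators of the auxiliary operators $\del,\delb,\tau,\taub,\la,\lab$ and their adjoints. Once this closure is secured from \eqref{eq:Ldtau}--\eqref{eq:Llatau}, the vanishing on $\Ke(\square)$ is automatic and the remaining hard-Lefschetz argument is purely formal.
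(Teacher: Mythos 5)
There is a genuine gap: your proposal never proves items (1), (2), and (3) of the theorem --- the complex conjugation, Hodge, and Serre dualities --- which are an explicit part of the statement. Nothing in your three steps addresses them. In the paper these follow from two facts: $\overline{\square} = \square$, and the identity $\delta^* = -\star\,\overline{\delta}\,\star$ for \emph{each} of the six operators $\delta = \del, \delb, \tau, \taub, \la, \lab$, which yields commutation relations between each $\Delta_\delta$ and the Hodge star and hence all three dualities at once. Verifying $\delta^* = -\star\,\overline{\delta}\,\star$ for the torsion-type operators $\tau, \taub, \la, \lab$ (not just for $\del, \delb$, where it is classical) is a real step that your write-up would need to supply; as it stands, the proposal is an incomplete proof of the theorem.

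The parts you do prove are correct, and it is worth noting where your route diverges from the paper's. Your argument that $L$ and $\La$ preserve $\Ke(\square)$ --- expanding $[L,\square]$ by the Jacobi identity and using the closure of the bracket relations \eqref{eq:Ldtau}, \eqref{eq:Ltaula}, \eqref{eq:Llatau} --- is a spelled-out version of exactly what the paper asserts, and the closure claims you list ($[L,\del]=-\la$, $[L,\del^*]=i(\delb+\taub)$, $[L,\la]=0$, $[L,\la^*]=-\tau^*$, etc.) all check out against the stated identities. For the Lefschetz isomorphisms, however, the paper argues differently: it uses the \emph{pointwise} injectivity of $L^{n-k}:\cA^{p,k-p}\to\cA^{p+n-k,n-p}$ to get injectivity on kernels, and then invokes Hodge duality (item (2)) to equate dimensions and conclude surjectivity. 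Your argument instead runs through the representation theory of the finite-dimensional $\mathfrak{sl}(2,\CC)$-module $\Ke(\square)$: the weight-space isomorphism $L^{n-k}$ between weights $-(n-k)$ and $n-k$, together with the observation that it is block-diagonal for the bidegree decomposition and that the target blocks exhaust total degree $2n-k$, forces each block to be an isomorphism. This is valid (given ellipticity for finite-dimensionality, which you note), and it has the incidental advantage of not relying on the Hodge duality you left unproven --- so your Lefschetz argument is self-contained. The defect is solely that the three duality statements of the theorem are never established.
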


\begin{proof}
For the first claim, $\Ke(\square)$ is the intersection of the kernels of $\delb, \del, \tau, \taub, \la, \lab$ and their adjoints, as can be seen by expanding $\langle \square \, \eta, \eta\rangle$. Any $k$-form in the kernel of one of these operators must have all of its $(p,q)$ components in the kernel as well, so the first claim follows. 

The operator satisfies $\overline{\square} = \square$, and one can check that for any $\delta = \delb, \del, \tau, \taub, \la, \lab$, we have $\delta^* = - \star \overline \delta \star$. This implies for any such $\delta$ that  $\Delta_\delta  \, \star = - \star \Delta_{\overline{\delta}}$, showing claims $(1)$, $(2)$, and $(3)$. 

 For the last claim, it follows from the equations in \eqref{eq:Ldtau}, \eqref{eq:Ltaula}, and \eqref{eq:Llatau} that $L$ and $\Lambda$ preserve
 $\Ke(\square)$. The last claim follows since $L^{n-k}: \cA^{p,k-p} \to \cA^{p+n-k,n-p}$ is injective in the given range, so the restriction is also injective, and therefore an isomorphism by Hodge duality. 
  \end{proof}

The arguments in fact show that the orthogonal decomposition and the dualities $(1)$, $(2)$, and $(3)$ of Theorem \ref{main} hold for the kernels of the operators  $\Delta_\del + \Delta_\delb$, $\Delta_\la + \Delta_\lab$, and $\Delta_\tau + \Delta_\taub$. But the Lefschetz-type properties appears to hold only for the operator $\square$. 

We now state some Corollaries of Theorem \ref{main}. The first follows from some well established facts about $\mathfrak{sl}(2,\CC)$ representations.

\begin{cor} \label{cor:nondec}
There is an orthogonal direct sum decomposition
\[
 \Ke(\square^{p,q})  = \bigoplus_{j \geq 0} L^j \left(  \Ke(\square^{p-j,q-j}) \right)_{\textrm{prim}}
\]
where 
\[
\left(  \Ke(\square^{r,s}) \right)_{\textrm{prim}} := \Ke(\square^{r,s})  \cap \Ke \Lambda.
\]
Moreover, 
\[
\dim  \Ke(\square^{p,q})  \leq \dim  \Ke(\square^{p+1,q+1}) \leq  \dots \leq \dim  \Ke(\square^{p+j,q+j})
\]
for all $0 \leq p, q \leq n$ and $p + q + 2j \leq n$, and the dimensions of the primitive spaces can be written in terms of successive differences of the numbers $\dim  \Ke(\square^{r,s})$. 
\end{cor}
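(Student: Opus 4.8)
The plan is to derive Corollary \ref{cor:nondec} as a purely formal consequence of the $\mathfrak{sl}(2,\CC)$-representation structure on $\Ke(\square)$ established in Theorem \ref{main}, together with the bigrading. The key observation is that $\Ke(\square)$ is a finite-dimensional $\mathfrak{sl}(2,\CC)$-module, and $L$ is the raising operator, $\La$ the lowering operator, while $H = [L,\La]$ acts diagonally. Since $L$ and $\La$ each shift bidegree by $(\pm 1, \pm 1)$, the bigrading refines the weight grading: on $\Ke(\square^{p,q})$ the operator $H$ acts as the scalar $(p+q-n)$, exactly as in the K\"ahler case. Thus the representation decomposes into isotypic pieces compatible with the bidegree.

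First I would invoke the standard structure theory of finite-dimensional $\mathfrak{sl}(2,\CC)$-representations: every such module is a direct sum of irreducibles, and the top of each irreducible (a highest-weight vector for $\La$, i.e.\ an element killed by $\La$) generates the rest of that irreducible under iterated application of $L$. Setting $\left(\Ke(\square^{r,s})\right)_{\mathrm{prim}} := \Ke(\square^{r,s}) \cap \Ke\La$ identifies the primitive (lowest-weight) spaces, and the Lefschetz decomposition theorem then gives
\[
\Ke(\square^{p,q}) = \bigoplus_{j \geq 0} L^j\left(\left(\Ke(\square^{p-j,q-j})\right)_{\mathrm{prim}}\right),
\]
where the sum is finite since the bidegree cannot drop below $(0,0)$. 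One must check that $L^j$ is injective on each primitive piece in the relevant range, but this follows from the representation theory: on a weight space of weight $p+q-n \leq 0$, the operator $L^j$ is injective. This is where the hard Lefschetz isomorphisms of Theorem \ref{main} enter, guaranteeing that the summands genuinely inject.

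For the monotonicity of dimensions, I would compare $\Ke(\square^{p,q})$ with $\Ke(\square^{p+1,q+1})$ using the map $L$. In the range $p+q+2j \leq n$ all the relevant weights are strictly negative (below the middle), so by hard Lefschetz $L: \Ke(\square^{p,q}) \to \Ke(\square^{p+1,q+1})$ is injective. Injectivity immediately yields $\dim \Ke(\square^{p,q}) \leq \dim \Ke(\square^{p+1,q+1})$, and iterating gives the chain of inequalities up to the middle. The final assertion, that the primitive dimensions are successive differences of the $\dim \Ke(\square^{r,s})$, follows by inverting the Lefschetz decomposition: since $\dim \Ke(\square^{p,q}) = \sum_{j} \dim\left(\Ke(\square^{p-j,q-j})\right)_{\mathrm{prim}}$, one recovers $\dim\left(\Ke(\square^{p,q})\right)_{\mathrm{prim}} = \dim \Ke(\square^{p,q}) - \dim \Ke(\square^{p-1,q-1})$ in the appropriate range.

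The main obstacle, though it is mostly bookkeeping, is correctly tracking the bidegree shifts against the weight grading and ensuring the ranges stated ($0 \leq p,q \leq n$ and $p+q+2j \leq n$) align exactly with the regime where the Lefschetz maps are injective. The representation theory is entirely standard once one knows $L, \La, H$ act as in Theorem \ref{main}; the only genuine input beyond linear algebra is the hard Lefschetz isomorphism already proved there, so the corollary is essentially an unpacking of that structure. I expect no serious analytic or geometric difficulty, only care in indexing.
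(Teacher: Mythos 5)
Your proposal is correct and takes essentially the same route as the paper, which proves this corollary simply by citing ``well established facts about $\mathfrak{sl}(2,\CC)$ representations'' applied to the finite-dimensional representation on $\Ke(\square)$ from Theorem \ref{main}; your unpacking (weights refined by bidegree, primitive decomposition, injectivity of $L$ below middle degree via hard Lefschetz, and dimension counting by successive differences) is exactly what that citation stands for. The one point you leave implicit is the \emph{orthogonality} of the summands $L^j\left(\Ke(\square^{p-j,q-j})\right)_{\mathrm{prim}}$, which follows from the paper's definition of $\La$ as the adjoint of $L$ together with the standard commutation relations, and is likewise part of the standard package being invoked.
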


The next corollary is in regards the Betti numbers $b^k:=\dim H^k(M)$ of a manifold $M$.
We first remark, it is immediate that 
$\Ke ( \Delta_\del + \Delta_\delb)  \subseteq \Ke (\Delta_d)$, 
and therefore for all $k\geq 0$ there are inequalities
\[\sum_{p+q=k}  \dim  \Ke \left( \Delta_\del^{p,q} + \Delta_\delb^{p,q} \right)   \leq b^{k}.
\]
Moreover,
\[
\dim  \Ke(  \Delta_\del^{p,q} + \Delta_\delb^{p,q})  \leq \min  \left( h^{p,q} , h^{q,p} \right),
\]
where $h^{p,q} = \dim \Ke (\Delta_\delb^{p,q})$ are the $(p,q)$-Hodge numbers.  Thus the same inequalities also hold for the kernel of $\square$ as well:
\begin{equation} \label{basicineq}
\sum_{p+q=k}  \dim  \Ke \left( \square^{p,q} \right)   \leq b^{k} \quad \quad \textrm{and} \quad \quad \dim  \Ke(  \square^{p,q})  \leq \min  \left( h^{p,q} , h^{q,p} \right).
\end{equation}
Thus, for example, for any Hermitian structure on $S^1 \times S^{2n-1}$, with $n > 1$, it holds that $\Ke(  \Delta_\del^{p,q} + \Delta_\delb^{p,q}) =\Ke (\square^{p,q}) =0$ for all $p,q$. 

The following much stronger Corollary requires the Lefschetz decomposition.

\begin{cor} \label{bettiineq}
If a compact Hermitian manifold of complex dimension $n$ has $b^k=0$  for some $k \leq n$, then 
\[
\Ke \left( \square^{p,q} \right) = 0
\]
for all $p,q$ satisfying $p+q \leq k$ and $p+q \equiv k \mod 2$, i.e. $p+q =k,k-2,k-4,\ldots$

Conversely, if 
\[
\Ke \left( \square^{p,q} \right) \neq 0
\]
for some $p,q$, then all of the Betti numbers $b^k$ must be non-zero for all $k$ satisfying 
$p+q \leq k \leq 2n-p-q$ and $p+q \equiv k \mod 2$.
\end{cor}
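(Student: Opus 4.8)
The plan is to prove the two implications separately, drawing on three facts already in hand: the basic inequality \eqref{basicineq}, the dimension monotonicity of Corollary~\ref{cor:nondec}, and the Lefschetz isomorphisms of Theorem~\ref{main}.

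For the first implication, suppose $b^k = 0$ with $k \leq n$. I would first apply \eqref{basicineq} in its summed form, $\sum_{p+q=k}\dim\Ke(\square^{p,q}) \leq b^k = 0$, to conclude that $\Ke(\square^{p,q}) = 0$ for every bidegree with $p+q = k$. To descend to the lower bidegrees of the same parity I would invoke Corollary~\ref{cor:nondec}: for a bidegree with $p+q = k - 2m$ and $m \geq 0$, the monotonicity gives
\[
\dim\Ke(\square^{p,q}) \leq \dim\Ke(\square^{p+m,q+m}),
\]
and its range hypothesis $p+q+2m \leq n$ reads exactly $k \leq n$, which holds. Since the term on the right has total bidegree $k$ it vanishes, forcing $\Ke(\square^{p,q}) = 0$; letting $m = 0,1,2,\dots$ sweeps out all $p+q = k, k-2, k-4, \dots$ as required.

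For the converse, suppose $\Ke(\square^{p,q}) \neq 0$ and choose a nonzero $\eta$ in it; set $a = p+q$. The asserted interval $p+q \leq j \leq 2n - p - q$ is empty unless $a \leq n$, so I may assume $a \leq n$. The key step is Theorem~\ref{main}, by which $L^{n-a}\colon \Ke(\square^{p,q}) \to \Ke(\square^{p+n-a,n-p})$ is an isomorphism, so in particular $L^{n-a}\eta \neq 0$. Factoring $L^{n-a} = L^{n-a-j}L^{j}$ then forces $L^{j}\eta \neq 0$ for every $0 \leq j \leq n-a$, whence $\Ke(\square^{p+j,q+j}) \neq 0$. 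Applying \eqref{basicineq} once more gives $b^{a+2j} \geq \dim\Ke(\square^{p+j,q+j}) > 0$, and as $j$ ranges from $0$ to $n-a$ the exponent $a+2j$ runs through precisely $a, a+2, \dots, 2n-a$, which is exactly the claimed range.

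The only genuine subtlety is the bookkeeping, and I would emphasize two points. First, the single Lefschetz isomorphism $L^{n-a}$ controls the Betti numbers both below and above the middle degree $n$ at once, because the intermediate powers $L^{j}\eta$ interpolate across the value $j = n-a$; this is what allows one to reach all the way up to $2n - p - q$ rather than stopping at $n$. Second, the two implications are in fact near-contrapositives of each other once one accounts for the parity constraint and for the reflection relating degrees $k$ and $2n-k$, which comes from the equality of total kernel dimensions in complementary degrees furnished by Hodge duality, Theorem~\ref{main}(2); I would either present them independently as above or deduce the first from the second using that symmetry.
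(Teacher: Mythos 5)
Your proof is correct and is exactly the argument the paper leaves implicit: the paper states this corollary without a written proof, indicating only that it follows from the Lefschetz structure, and your combination of the summed inequality \eqref{basicineq}, the monotonicity of Corollary \ref{cor:nondec} (for the downward implication), and the hard Lefschetz isomorphisms of Theorem \ref{main} with the factoring $L^{n-a}=L^{n-a-j}L^{j}$ (for the converse, including the degrees past $n$) is precisely the intended route. The bookkeeping of the ranges and parities checks out, so nothing is missing.
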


For example, for any Hermitian structure on $S^2 \times S^2 \times S^2$, we must have $\Ke \left( \square^{p,q} \right) = 0$ for all $p+q$ odd since $b^3=0$.

On the other hand, there are many non-trivial examples. 
One can compute the dimensions of the spaces $\Ke (\square^{p,q})$, for $\square$ acting on left invariant forms of the Kodaira-Thurston nilmanifold, with non-trivial lie bracket $[X,Y]=-Z$, complex structure given by $JX=Y$ and $JZ=W$, and Hermitian metric making $\{X,Y,Z,W\}$ orthonormal, to obtain
\[
\dim \left( \Ke  \square^{p,q}  \right) =
\arraycolsep=6pt\def\arraystretch{1.4}
 \begin{array}{|c|c|c|c|c|}
 \hline
 1&1&0  \\\hline
 1& 2 &1    \\\hline
 0&1&1 \\\hline
\end{array}
\quad \quad \textrm{whereas} \quad \quad
h^{p,q}= 
\arraycolsep=6pt\def\arraystretch{1.4}
 \begin{array}{|c|c|c|c|c|}
 \hline
 1&1&1 \\\hline
 2& 2 &2    \\\hline
 1&1&1 \\\hline
\end{array}.
\]

Another example is the Iwasawa manifold, a nilmanifold whose only nonzero differential on a left invariant basis of $(1,0)$-forms $X_1, X_2, X_3$ is $\del X_3 = -X_1 \wedge X_2$. With the Hermitian structure making $X_1, X_2, X_3$ orthonormal, so that $\omega = \sum_j X_j \overline{X_j}$, we have
\[
\dim \left( \Ke  \square^{p,q}  \right) =
\arraycolsep=6pt\def\arraystretch{1.4}
 \begin{array}{|c|c|c|c|c|}
 \hline
 1& 2 & 0  & 0\\\hline
 2& 2 & 2 & 0    \\\hline
 0& 2 & 2 & 2 \\\hline
 0& 0 & 2 &1  \\\hline
\end{array}
\quad \quad \textrm{whereas} \quad \quad
h^{p,q}=
\arraycolsep=6pt\def\arraystretch{1.4}
 \begin{array}{|c|c|c|c|c|}
 \hline
 1& 3 & 3  & 1\\\hline
 2& 6 & 6 & 2    \\\hline
 2& 6 & 6 & 2 \\\hline
 1& 3 & 3 &1  \\\hline
\end{array}.
\]

We give one more corollary which shows that the Hodge and Betti numbers of compact complex manifolds can be severely restricted by the algebra of the fundamental form and its derivatives.

\begin{cor}
If a holomorphic $p$-form $\eta$ on a compact Hermitian manifold $(M, \omega)$ is $\del$-harmonic, and satisfies $d \omega \wedge \eta = 0$, then all of the 
Hodge numbers $h^{p,0}, h^{p+1,1}, \ldots, h^{n,n-p}$ and $h^{0,p}, h^{1,p+1}, \ldots, h^{n-p,n}$ are non-zero. In particular, if $p \neq 0$, then all of the Betti numbers $b^p, b^{p+2}, \ldots , b^{2n-p}$ are at least two.
\end{cor}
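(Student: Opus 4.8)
The plan is to exhibit explicit nonzero elements in each of the claimed harmonic spaces $\Ke(\square^{r,s})$ and then invoke the basic inequalities \eqref{basicineq} together with Theorem \ref{main}. The starting point is the hypothesis: $\eta$ is a holomorphic $p$-form (so $\eta \in \cA^{p,0}$ and $\delb \eta = 0$), it is $\del$-harmonic, and $d\omega \wedge \eta = 0$. I would first verify that $\eta \in \Ke(\square^{p,0})$, that is, that $\eta$ lies in the kernel of all six operators $\del, \delb, \tau, \taub, \la, \lab$ and their adjoints. Since $\eta$ has bidegree $(p,0)$, several of these are automatic for degree reasons: $\delb \eta$, $\lab \eta$, $\taub^* \eta$, and so on land in spaces that must vanish or can be controlled. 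The condition $d\omega \wedge \eta = 0$ means $(\del\omega + \delb\omega)\wedge \eta = \la\eta + \lab\eta = 0$, and since $\la\eta$ and $\lab\eta$ have different bidegrees, both vanish separately; this is precisely the input needed to kill the $\la,\lab$ contributions. The $\del$-harmonicity handles $\Delta_\del$, and I expect the torsion terms $\tau,\taub$ to be controlled via the identities \eqref{eq:Llatau} relating $\tau = [\La,\la]$ to $\la$, so that $\la\eta = 0$ forces the relevant torsion actions to vanish on $\eta$.

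Once $\eta \in \Ke(\square^{p,0})$ is established and $\eta \neq 0$, the inequality $\dim \Ke(\square^{p,0}) \le \min(h^{p,0}, h^{0,p})$ from \eqref{basicineq} is not quite what I want directly; instead I would use that $\Ke(\square^{p,0}) \neq 0$ together with Theorem \ref{main}. By complex conjugation duality (part (1)), $\Ke(\square^{0,p}) = \overline{\Ke(\square^{p,0})} \neq 0$. By the Lefschetz isomorphisms $L^{n-k}: \Ke(\square^{p,k-p}) \xrightarrow{\cong} \Ke(\square^{p+n-k,n-p})$ applied with $k = p$, powers of $L$ carry $\Ke(\square^{p,0})$ isomorphically onto $\Ke(\square^{p+j,j})$ for the full Lefschetz range, so each such space is nonzero; applying this to the conjugate gives $\Ke(\square^{j,p+j}) \neq 0$ as well. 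Then the second inequality in \eqref{basicineq}, $\dim\Ke(\square^{r,s}) \le \min(h^{r,s},h^{s,r})$, forces $h^{p+j,j} \neq 0$ and $h^{j,p+j} \neq 0$ for all relevant $j$, which is exactly the claimed nonvanishing of the two chains of Hodge numbers $h^{p,0},\ldots,h^{n,n-p}$ and $h^{0,p},\ldots,h^{n-p,n}$.

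For the Betti number conclusion, I would apply the converse direction of Corollary \ref{bettiineq}: since $\Ke(\square^{p,0}) \neq 0$, all Betti numbers $b^k$ with $p \le k \le 2n-p$ and $k \equiv p \bmod 2$ are nonzero, giving $b^p, b^{p+2}, \ldots, b^{2n-p} > 0$. To upgrade ``nonzero'' to ``at least two'' when $p \neq 0$, I would use that for $p \ge 1$ the bidegrees $(p+j,j)$ and $(j,p+j)$ are distinct, so the two nonzero harmonic subspaces $\Ke(\square^{p+j,j})$ and $\Ke(\square^{j,p+j})$ sit in different summands of the bidegree decomposition of $\Ke(\square)\cap\cA^{k}$ for $k = p+2j$. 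Combined with the first inequality of \eqref{basicineq}, namely $\sum_{r+s=k}\dim\Ke(\square^{r,s}) \le b^k$, the two distinct nonzero summands contribute at least $2$, so $b^k \ge 2$.

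The main obstacle I anticipate is the first step: carefully checking that the torsion operators $\tau$ and $\taub$ (and their adjoints) annihilate $\eta$, since the hypotheses only directly give information about $\del$, $\delb$, and the wedge action $d\omega\wedge\eta$. The cleanest route is to observe that on a $(p,0)$-form many operators vanish purely by bidegree (anything lowering $q$ below $0$, or raising it in a way incompatible with the harmonicity already imposed), and to express $\tau\eta,\taub\eta$ through the commutator identities \eqref{eq:Llatau}–\eqref{eq:Ltaula} in terms of $\la\eta,\lab\eta$, which vanish by hypothesis; one must also confirm the adjoint conditions, most of which are forced by bidegree since $\eta$ sits at the edge $q=0$ of the bigrading.
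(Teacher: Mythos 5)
Your overall route is the same as the paper's: show $\eta \in \Ke(\square^{p,0})$, then propagate nonvanishing using the Lefschetz structure, conjugation duality, the inequalities \eqref{basicineq}, and Corollary \ref{bettiineq}. Your second and third paragraphs are correct and match what the paper intends (one small overstatement: for intermediate powers, Theorem \ref{main} gives only \emph{injectivity} of $L^j$ on $\Ke(\square^{p,0})$, by composing up to the top isomorphism $L^{n-p}$, not that $L^j$ is \emph{onto} $\Ke(\square^{p+j,j})$; injectivity is all you need, since $L^j$ preserves $\Ke(\square)$).

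The gap is in the first step, at exactly the point you flagged but did not resolve: the condition $\tau^*\eta = 0$. Your fallback claim that the adjoint conditions are ``most[ly] forced by bidegree'' covers $\delb^*$, $\la^*$, $\lab^*$, and $\taub^*$ (each maps $\cA^{p,0}$ into a space with negative antiholomorphic degree), but it fails precisely for $\tau^*$, which has bidegree $(-1,0)$ and maps $\cA^{p,0} \to \cA^{p-1,0}$, a nonzero space when $p \geq 1$; and your commutator strategy as stated only addresses $\tau\eta$ and $\taub\eta$ (via $\tau = [\La,\la]$, $\taub = [\La,\lab]$, together with $\la\eta = \lab\eta = 0$ and $\La\eta = 0$). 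The paper closes this last condition with the identity $[\La, \taub] = 2i\tau^*$ from \eqref{eq:Ltaula}: applied to $\eta$ it gives $2i\tau^*\eta = \La\taub\eta - \taub\La\eta = 0$, since $\taub\eta = 0$ (just established) and $\La\eta = 0$ by degree. With that single identity supplied, all twelve conditions hold, so $\square^{p,0}\eta = 0$, and the rest of your argument goes through as written. (Note also that both your argument and the paper's implicitly require $\eta \neq 0$, which the statement of the corollary leaves tacit.)
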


\begin{proof}
By assumption $\eta \in \Ke (\Delta_\del + \Delta_\delb)$ and $\eta \in \Ke ( \la) \cap \Ke (\lab)$, and we claim that  $\eta$ is in the kernel of the zero-order summand $\Delta_\tau + \Delta_\taub + \Delta_\la + \Delta_\lab$. Indeed, for degree reasons,  $\eta$ is in the kernel of $\La, \la^*,  \lab^*$, and  $\taub^*$, which implies $\tau \eta = \taub \eta = 0$, and finally we use the fact that $[\La, \taub ] = 2i \tau^*$ to see $\eta$ is in the kernel of $\tau^*$. This shows $\square^{p,0} \eta = 0$, and the claims now follow from Corollary \ref{cor:nondec}, Equation \ref{basicineq}, and Corollary  \ref{bettiineq}.
\end{proof}

We conclude with some additional facts concerning the calculation of these harmonics spaces.
First, the definition of these harmonics simplifies in the case that the metric is pluri-closed (also known as strongly K\"ahler torsion, or SKT), i.e. $\del \delb \omega=0$. We use Demailly's Bochner-Kodaira-Nakano-type identity proved in \cite{De}:
\begin{equation} \label{eq:BKN}
\Delta_\delb = \Delta_{\del + \tau} + T_\omega,
\end{equation}
where $T_\omega = [\La, [\La, \frac i 2 \del \delb \omega]] - \Delta_\la$. By \eqref{eq:BKN},
\[
\Delta_{\del + \tau} = \Delta_\delb + \Delta_\la, \quad \textrm{and} \quad 
\Delta_{\delb + \taub} = \Delta_\del + \Delta_\lab,
\]
and therefore the following containments are equalities
\begin{align*}
\Ke (\Delta_\del + \Delta_\delb + \Delta_\tau + \Delta_\taub) &
\subseteq \Ke (\Delta_{\del+\tau} + \Delta_{\delb +\taub} + \Delta_\tau + \Delta_\taub) \\
& = \Ke (\square) \\
& \subseteq
 \Ke (\Delta_\del + \Delta_\delb + \Delta_\tau + \Delta_\taub ).
\end{align*}
This shows:
\begin{prop}
If the metric is pluri-closed, i.e. $\del \delb \omega = 0$, then
\[
\Ke (\square )=  \Ke (\Delta_\del + \Delta_\delb + \Delta_\tau + \Delta_\taub).
\]
\end{prop}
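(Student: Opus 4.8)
The plan is to exploit that $\square$ exceeds $\Delta_\del + \Delta_\delb + \Delta_\tau + \Delta_\taub$ by exactly the nonnegative zero-order term $\Delta_\la + \Delta_\lab$. Each summand of $\square$ has the form $\Delta_\delta = \delta\delta^* + \delta^*\delta \geq 0$, so $\langle \square\,\eta,\eta\rangle$ is a sum of squared norms, and the kernel of any such sum is the intersection of the kernels of its summands. In particular $\Ke(\square) \subseteq \Ke(\Delta_\del + \Delta_\delb + \Delta_\tau + \Delta_\taub)$ comes for free, by discarding the two nonnegative summands $\Delta_\la$ and $\Delta_\lab$. The entire content of the statement is therefore the reverse inclusion, and the pluri-closed hypothesis must be what powers it.

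For the reverse inclusion I would feed $\del\delb\omega = 0$ into the Bochner--Kodaira--Nakano identity \eqref{eq:BKN}. The hypothesis kills the iterated-bracket term in $T_\omega$, leaving $T_\omega = -\Delta_\la$, so \eqref{eq:BKN} collapses to $\Delta_{\del+\tau} = \Delta_\delb + \Delta_\la$; conjugating yields $\Delta_{\delb+\taub} = \Delta_\del + \Delta_\lab$. Adding $\Delta_\tau + \Delta_\taub$ and summing then rewrites the operator as $\square = \Delta_{\del+\tau} + \Delta_{\delb+\taub} + \Delta_\tau + \Delta_\taub$. The purpose of this rewriting is to absorb the two stubborn order-zero Laplacians $\Delta_\la, \Delta_\lab$ into the honest Laplacians of the twisted differentials $\del + \tau$ and $\delb + \taub$.

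With $\square$ in this form the reverse inclusion is immediate. Take $\eta \in \Ke(\Delta_\del + \Delta_\delb + \Delta_\tau + \Delta_\taub)$; as above this forces $\del\eta = \del^*\eta = \delb\eta = \delb^*\eta = \tau\eta = \tau^*\eta = \taub\eta = \taub^*\eta = 0$. Hence $(\del+\tau)\eta = (\del+\tau)^*\eta = 0$ and likewise for $\delb+\taub$, giving $\Delta_{\del+\tau}\eta = \Delta_{\delb+\taub}\eta = 0$. Combined with $\Delta_\tau\eta = \Delta_\taub\eta = 0$, the rewritten form of $\square$ gives $\square\eta = 0$, so $\eta \in \Ke(\square)$. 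This upgrades the chain of inclusions to the desired equality.

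I expect the only real obstacle to be conceptual rather than computational: recognizing that the order-zero defect $\Delta_\la + \Delta_\lab$, which is not obviously controlled by $\del, \delb, \tau, \taub$, is exactly what the BKN identity trades for a difference of genuine Laplacians once $\del\delb\omega=0$. The rest is the routine fact that the kernel of a sum of nonnegative self-adjoint operators is the intersection of their kernels. The one point to verify with care is the sign and the conjugate version of \eqref{eq:BKN}, since the argument depends on $T_\omega$ reducing to precisely $-\Delta_\la$ (and its conjugate to $-\Delta_\lab$).
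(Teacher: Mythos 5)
Your proof is correct and follows essentially the same route as the paper: both use Demailly's identity \eqref{eq:BKN} with $\del\delb\omega=0$ to get $\Delta_{\del+\tau}=\Delta_\delb+\Delta_\la$ and $\Delta_{\delb+\taub}=\Delta_\del+\Delta_\lab$, rewrite $\square$ as $\Delta_{\del+\tau}+\Delta_{\delb+\taub}+\Delta_\tau+\Delta_\taub$, and close the argument with the fact that the kernel of a sum of nonnegative self-adjoint operators is the intersection of the kernels of the summands.
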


Next, focusing on $\la$ rather than $\tau$, we obtain some vanishing results by cohomological methods.  The square-zero operators $\la$ and $\lab$ induce cohomologies themselves. For each $q\geq0$, let
\[
H_\la^{p,q} (M,J ,\omega) = \frac{\Ke \left( \la: \cA^{p,q} \to \cA^{p+2,q+1} \right)} {\Img \left( \la: \cA^{p-2,q-1} \to \cA^{p,q} \right)},
\]
with induced differential $\del$. In the K\"ahler case, $H_\la^{p,q} = \cA^{p,q}$ for all $p,q$.
Since $\la$ is zero-order, and square zero, there is a fiberwise orthogonal Hodge decomposition
\[
\cA = \Img (\la) \oplus \left( \Ke \Delta_\la \right) \oplus \Img (\la^*)
\]
respecting the $(p,q)$ bigrading. It follows that 
$H_\la^{p,q} \cong \Ke \Delta_\la^{p,q}$, and moreover the groups satisfy Serre duality, since  $\Delta_\la \star = - \star \Delta_{\overline{\la}}$. Analogous results hold for $\lab$, and we conclude:

\begin{prop}
There is an inclusion $\Ke (\square^{p,q}) \to H_\la^{p,q}$,  so the vanishing of the latter zero-order cohomology implies the second order elliptic operator has zero kernel. Similarly, this holds for $H_\lab^{p,q}$.
\end{prop}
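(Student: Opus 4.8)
The plan is to leverage the variational characterization of $\Ke(\square)$ established in the proof of Theorem \ref{main}, together with the isomorphism $H_\la^{p,q} \cong \Ke \Delta_\la^{p,q}$ just recorded. The key observation is that $\square$ dominates the zero-order Laplacian $\Delta_\la$ at the level of kernels, so that harmonic forms for $\square$ are in particular $\la$-harmonic.

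First I would recall that, since $\square$ is a sum of operators of the form $[\delta, \delta^*]$, expanding $\langle \square \, \eta, \eta\rangle$ exhibits it as a sum of nonnegative terms $\n \delta \eta \n^2 + \n \delta^* \eta \n^2$ ranging over $\delta = \del, \delb, \tau, \taub, \la, \lab$. Hence $\eta \in \Ke(\square)$ forces every summand to vanish; restricting to bidegree $(p,q)$, any $\eta \in \Ke(\square^{p,q})$ satisfies $\la \eta = 0$ and $\la^* \eta = 0$, so $\eta \in \Ke(\Delta_\la^{p,q})$. This yields the containment $\Ke(\square^{p,q}) \subseteq \Ke(\Delta_\la^{p,q})$, and composing with the isomorphism $\Ke(\Delta_\la^{p,q}) \cong H_\la^{p,q}$ produces the desired map. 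Concretely, since each such $\eta$ lies in $\Ke(\la)$, it represents a class in $H_\la^{p,q}$, and the map $\Ke(\square^{p,q}) \to H_\la^{p,q}$ is the restriction of the natural projection $\Ke(\la) \to H_\la^{p,q}$.

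The one point requiring care is injectivity. If $\eta \in \Ke(\square^{p,q})$ maps to zero, then $\eta \in \Img(\la)$; but $\eta$ also lies in $\Ke(\la^*)$, which by the fiberwise orthogonal Hodge decomposition $\cA = \Img (\la) \oplus \Ke \Delta_\la \oplus \Img (\la^*)$ is orthogonal to $\Img(\la)$, forcing $\eta = 0$. Equivalently, one may observe that the inclusion $\Ke(\square^{p,q}) \subseteq \Ke(\Delta_\la^{p,q})$ followed by the isomorphism $\Ke(\Delta_\la^{p,q}) \cong H_\la^{p,q}$ is automatically injective, since a $\Delta_\la$-harmonic representative that is $\la$-exact must vanish. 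In either formulation, the vanishing of $H_\la^{p,q}$ then forces $\Ke(\square^{p,q}) = 0$, which is the stated consequence.

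Since the argument uses only that $\la$ is zero-order, square-zero, and enters $\square$ through the summand $\Delta_\la$, I expect no serious obstacle: the proof is essentially immediate once the characterization of $\Ke(\square)$ and the Hodge-theoretic isomorphism $H_\la^{p,q} \cong \Ke \Delta_\la^{p,q}$ are in hand. Finally, because $\lab$ is likewise zero-order and square-zero and appears symmetrically in $\square$ through $\Delta_\lab$, replacing $\la$ by $\lab$ throughout gives verbatim the analogous inclusion $\Ke(\square^{p,q}) \to H_\lab^{p,q}$.
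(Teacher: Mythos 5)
Your proof is correct and follows exactly the paper's (largely implicit) argument: you use the characterization of $\Ke(\square)$ as the common kernel of all six operators and their adjoints to get $\Ke(\square^{p,q}) \subseteq \Ke(\Delta_\la^{p,q})$, then compose with the isomorphism $\Ke(\Delta_\la^{p,q}) \cong H_\la^{p,q}$ furnished by the fiberwise Hodge decomposition, with injectivity coming from the orthogonality of $\Ke(\la^*)$ and $\Img(\la)$. The careful verification of injectivity is a nice touch that the paper leaves to the reader, but the route is the same.
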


This includes some interesting special cases, including Hermitian manifolds that are particularly \emph{far} from being K\"ahler. For example, if the map 
 $\lab :\cA^{1,0} \to \cA^{2,2}$ is everywhere injective in complex dimension $3$ or more, then $\Ke (\square^{1,0})$ is zero.
   
It is particular to complex dimension $3$ that the map $\lab :\cA^{2,0} \to \cA^{3,2}$ can be an isomorphism, and in this case we conclude $\Ke (\square^{p,q}) = 0$ for $(p,q) = (2,0), (3,2), (1,3),$ and $(0,1)$. It would be interesting to study this class of highly non-symplectic metrics on (almost) complex manifolds. The mirror concept in complex dimension $3$, when the component $\mub:\cA^{1,0} \to \cA^{0,2}$ of the exterior derivative is an isomorphism, pertains to 
 maximally non-integrable almost complex structures, c.f. \cite{CWDol}, which includes the family of strictly nearly K\"ahler $6$-manifolds.

On the other hand, while the operator $\tau$ is not a differential, instead of a cohomological vanishing argument, one can rely on an analytic argument. 
 
 \begin{prop}
 If any of the zero-order operators $\tau,\taub,\la, \lab$ or their adjoints are injective at a single point, on bidegree $(p,q)$, then $  \Ke(\square^{p,q})=0$. 
 \end{prop}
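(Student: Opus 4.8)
The plan is to combine the pointwise (zero-order) constraints satisfied by harmonic forms with a unique continuation argument for the elliptic operator $\square$. As in the proof of Theorem \ref{main}, expanding $\langle \square\,\eta,\eta\rangle$ shows that every $\eta\in\Ke(\square^{p,q})$ lies simultaneously in the kernels of $\del,\delb,\tau,\taub,\la,\lab$ and of all their adjoints. In particular $\eta$ is annihilated, as a smooth section, by each of the eight zero-order operators $\tau,\taub,\la,\lab,\tau^*,\taub^*,\la^*,\lab^*$, and it satisfies $\delb\eta=\delb^*\eta=0$, so that $\Delta_\delb\eta=0$ and $\square\,\eta=0$.

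Suppose, without loss of generality, that $\tau$ is injective at a point $x_0$ on bidegree $(p,q)$; the other seven cases are identical, using that $\eta$ lies in each of the corresponding kernels. Since $\tau$ has order zero, it acts on $(p,q)$-forms as a smooth family of linear maps $\tau_x$ on the fibers over $x$, and injectivity of a linear map between finite-dimensional spaces is an open condition on the base (maximal rank is open). Hence $\tau$ is injective on an entire open neighborhood $U$ of $x_0$. Because $\tau\eta=0$ everywhere, it follows that $\eta$ vanishes identically on $U$.

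Finally I would invoke unique continuation. The operator $\square=\Delta_\del+\Delta_\delb+(\text{zero order})$ is second-order elliptic with scalar principal symbol (a positive multiple of $|\xi|^2$ times the identity on $(p,q)$-forms); the same is true of $\Delta_\delb$, which also annihilates $\eta$. By the strong unique continuation property for such Laplace-type operators (Aronszajn's theorem), a solution of $\square\,\eta=0$ on a connected manifold that vanishes on the open set $U$ must vanish identically. Therefore $\eta\equiv 0$, and since $\eta\in\Ke(\square^{p,q})$ was arbitrary we conclude $\Ke(\square^{p,q})=0$.

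The step I expect to be the main obstacle is the unique continuation input: one must confirm that $\square$ (equivalently $\Delta_\delb$) falls within the scope of a unique continuation theorem, which hinges on the scalar, or at least diagonal, form of its leading symbol, and on the connectedness of $M$ (otherwise the conclusion holds only on the component containing $x_0$). By contrast, the openness of fiberwise injectivity and the resulting vanishing of $\eta$ on a neighborhood are routine.
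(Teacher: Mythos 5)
Your proposal is correct and follows essentially the same route as the paper: harmonicity for $\square$ forces $\eta$ into the kernel of every zero-order operator, fiberwise injectivity is an open condition so $\eta$ vanishes on a neighborhood, and unique continuation for an elliptic Laplace-type operator (the paper uses the $d$-harmonicity of $\eta$ and $\Delta_d$, you use $\Delta_\delb$ or $\square$ itself) forces $\eta\equiv 0$. The only inessential difference is which Laplacian carries the Aronszajn-type argument, and your caveat about connectedness is a reasonable implicit hypothesis in the paper.
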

 
 In particular, $\dim  \Ke(\square^{0,0})$ is either one or zero, depending on whether the Hermitian structure is K\"ahler.
 
 \begin{proof}
If $\eta \in \Ke(\square^{p,q})$, and $\delta$ is any such operator in the claim, then $\delta \eta =0$ at a point implies $\eta$ vanishes on an open set, and therefore the $d$-harmonic form $\eta$ is identically zero. 
\end{proof}

It remains to further study what these numbers tell us about a given Hermitian structure, and conversely, to determine what are the permissible numbers for a given complex structure.

\bibliographystyle{alpha}

\bibliography{biblio}

\end{document}